\newtheorem{theorem}{Theorem}
\newtheorem{corollary}{Corollary}
\newtheorem*{thma}{Theorem}
\theoremstyle{definition}
\newtheorem{definition}{Definition}
\def\bN{\mathbb N}
\def\bQ{\mathbb Q}
\begin{document}

\title[Graphs with given weighted number of components]{Enumeration of graphs with given  weighted number of  connected components}
\date{\today}
\author{Joungmin Song}
\address{Division of Liberal Arts \& Sciences\\GIST\\ Gwangju, 500-712, Korea}

\email{songj@gist.ac.kr}
\begin{abstract} We give a generating function for the number of graphs with given numerical properties and prescribed weighted number of connected components. As an application, we give a generating function for the number of bipartite graphs of  given order, size and number of connected components.

\end{abstract}
\maketitle

\section{Introduction}
In this paper, we consider the generating function for the number of graphs with prescribed numerical properties and the {\it weighted number} of connected components: Given a weight vector $\omega = ( \omega_1, \omega_2, \dots)  \in \bQ^\infty$, the {\it $\omega$-weighted number of connected components} of a graph $G$ with connected components $G_1, \dots, G_s$ is defined to be
\[
h_0^\omega(G) := \sum_{i=1}^s \omega_{|G_i|} |G_i|.
\]
The notion of weighted number of components frequently arises in everyday life, such as group or bundle discount.  More complex and sophisticated applications may be found in network analysis.

In this paper, we give a generating function for the number of graphs of given order, size, and $h_0^\omega$. (In fact, order, size can be replaced by homogeneous properties). When $\omega$ is the uniform trivial weight $ (1, 1, 1, \dots)$, $h^\omega_0$ is simply the number of connected components. Our method is a slight modification of the exponential formula \cite{StanleyV2}. We first define an auxiliary multi-variabled exponential generating function Equation~(\ref{E:aux}) which enumerates the number of graphs with prescribed number of connected components of given order, and use a ring homomorphism $\tau_\omega$ to induce the desired generating function. Let $f_i$ be additive functions on graphs (Definition~\ref{D:additive}) and $\mathcal P$ be a collection of homogeneous properties (Definition~\ref{D:homogen}).

\begin{thma} The number of graphs with properties $\mathcal P$, given $f_i$ values and $h_0^\omega$ is generated by
\[
\tau_\omega
\left(\exp \sum_{n, k_i} g_{n,k_1,\dots,k_s} \frac1{n!} x^n \prod_{i=1}^s y_i^{k_i}z_n\right)
\]
where  $g_{n,k_1,\dots,k_s}$ is the number of  connected graphs with properties $\mathcal P$ and given numerical values of $f_i$, $i=1,\dots, s$, and $\tau_\omega$ is the ring homomorphism determined by mapping $ \prod z_i^{\alpha_i}$ to $z^{\sum \omega_i\alpha_i}$ (Definition~\ref{D:tau}).

\end{thma}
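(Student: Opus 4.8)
The plan is to derive the stated generating function from the exponential formula of \cite{StanleyV2} by decorating the connected generating function with the auxiliary variables $z_n$ and then transporting the result along the ring homomorphism $\tau_\omega$. The combinatorial input is that every graph decomposes uniquely into its connected components, so the class of graphs satisfying the homogeneous properties $\mathcal{P}$ is closed under disjoint union and, conversely, each component of such a graph again satisfies $\mathcal{P}$; together with the additivity of the $f_i$, this is precisely the hypothesis under which the exponential formula recovers the full enumerator from the connected one.

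First I would record the decorated connected generating function
\[
C(x, y_i, z_n) := \sum_{n, k_1, \dots, k_s} g_{n, k_1, \dots, k_s}\, \frac{1}{n!}\, x^n \prod_{i=1}^s y_i^{k_i}\, z_n,
\]
in which $x$ tracks the order, each $y_i$ tracks the value of $f_i$, and the single factor $z_n$ marks one connected component of order $n$. Applying the exponential formula then gives that $\exp C$ enumerates all graphs $G$ with properties $\mathcal{P}$, each $G$ on $|G|$ labelled vertices contributing
\[
\frac{1}{|G|!}\, x^{|G|} \prod_{i=1}^s y_i^{f_i(G)} \prod_n z_n^{\alpha_n(G)},
\]
where $\alpha_n(G)$ is the number of components of $G$ of order $n$. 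The three trackings are compatible with the component decomposition for three separate reasons: the orders add, so $x^{|G|} = \prod_j x^{|G_j|}$; the $f_i$ are additive, so $y_i^{f_i(G)} = \prod_j y_i^{f_i(G_j)}$; and each component of order $n$ contributes exactly one $z_n$, giving $\prod_n z_n^{\alpha_n(G)}$. The standard multinomial bookkeeping of the exponential formula, together with homogeneity of $\mathcal{P}$, assembles these products correctly over all ways of partitioning a labelled vertex set into components.

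Next I would apply $\tau_\omega$. It is the ring homomorphism fixing $x$ and the $y_i$ and sending each monomial $\prod_n z_n^{\alpha_n}$ to $z^{\sum_n \omega_n \alpha_n}$ (Definition~\ref{D:tau}), equivalently $z_n \mapsto z^{\omega_n}$; being prescribed on monomials and extended multiplicatively, it is visibly a homomorphism. It may be applied to the infinite series $\exp C$ termwise in $x$, since a graph of order $n$ has components of order at most $n$, so the coefficient of $x^n$ in $\exp C$ is a polynomial in $z_1, \dots, z_n$ only. Under $\tau_\omega$ the component-order monomial of each $G$ becomes $z^{\sum_n \omega_n \alpha_n(G)}$, which is $z^{h_0^\omega(G)}$ after grouping the components of $G$ according to their order, so
\[
\tau_\omega(\exp C) = \sum_G \frac{1}{|G|!}\, x^{|G|} \prod_{i=1}^s y_i^{f_i(G)}\, z^{h_0^\omega(G)},
\]
the sum ranging over all graphs with properties $\mathcal{P}$. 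Reading off the coefficient of $\frac{1}{n!} x^n \prod_i y_i^{k_i} z^w$ yields the number of such graphs of order $n$ with $f_i = k_i$ and $h_0^\omega = w$, which is the assertion.

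The only point requiring genuine care is the first step: checking that the decorated class really does satisfy the hypotheses of the exponential formula. This is where the definitions of \emph{homogeneous property} and \emph{additive function} do the work, guaranteeing that ``having $\mathcal{P}$ with prescribed $f_i$'' is detected component-by-component and is multiplicative under disjoint union. Once this is in place the remainder is formal: the exponential formula supplies $\exp C$, and $\tau_\omega$, acting only on the $z_n$ and applicable termwise in $x$, merely relabels the component-order data as the weighted count $h_0^\omega$. I do not expect the rational (rather than integer) exponents produced by $\omega \in \mathbb{Q}^\infty$ to cause any difficulty, provided $\tau_\omega$ is understood to land in the ring of series in $z$ with rational exponents.
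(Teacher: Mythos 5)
Your proposal is correct, but it reaches the theorem by a genuinely different route from the paper. You invoke the decorated exponential formula as a black box: since the component-order counters $\alpha_n$ are themselves additive functions and $\mathcal P$ is homogeneous, Theorem~\ref{T:Read} applied with the extra markers $z_n$ immediately gives that $\exp\tilde g$ enumerates graphs by order, $f_i$-values, and component-order profile, and the only remaining content is the formal application of $\tau_\omega$. The paper instead proves this from scratch in Section~\ref{S:main}: it fixes $\nu$, counts by hand the graphs with a prescribed multiset of component data $(n_i,k_{ij},m_{ij})$ --- the multinomial coefficient times $\prod g_{n_i,k_{ij}}^{m_{ij}}/\prod m_{ij}!$ of Equation~(\ref{E:number-components}) --- then expands $\exp\tilde g$ as a product of exponentials, extracts the $\prod(z_{n_i}y^{k_{ij}}x^{n_i})^{m_{ij}}$ coefficient (Equation~(\ref{E:general-term})), and matches the two after applying $\tau_\omega$; in effect it re-derives the $z$-decorated exponential formula rather than citing it. Your version is shorter and more modular, and it supplies a justification the paper glosses over: that $\tau_\omega$, defined on $\bQ[z_1,z_2,\dots]$, may be applied termwise to the infinite series $\exp\tilde g$ because the coefficient of $x^n$ is a polynomial in $z_1,\dots,z_n$ only. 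What the paper's hands-on expansion buys in exchange is self-containedness and an explicit formula for $T_{n,k,\nu}$. One point to be aware of: your final identification $\sum_n\omega_n\alpha_n(G)=h_0^\omega(G)$ tacitly uses the reading $h_0^\omega(G)=\sum_j\omega_{|G_j|}$, which is indeed what the paper's own proof, its trivial-weight remark, and Corollary~\ref{C:bipartite} all track, whereas the displayed definition of $h_0^\omega$ in the introduction carries an extra factor $|G_i|$; this discrepancy is internal to the paper (the two normalizations differ only by rescaling $\omega$), and your argument proves the theorem in the form the paper actually establishes.
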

\

As an application, we shall enumerate the number $\beta_{n,k,\nu}$  of bipartite graphs of order $n$, size $k$ with $\nu$ connected components.  We specifically chose this trivial weight $\omega = (1,1,1,\dots)$ case because of the importance of bipartite graphs and because it will be used in our forthcoming paper \cite{SONG4}. Enumeration of bipartite graphs have been studied by many authors (\cite{Harary58, Harary63, Harary73, Hanlon, StanleyV2, Ueno} to name just several), but our search did not turn up a table of $\beta_{n,k,\nu}$, and we put it in the appendix.

All graphs are assumed to be labeled.

\section{Exponential formula}
In this section, we quickly review the exponential formula for generating functions \cite{StanleyV2} and give a few variant forms of it that will be useful for our purpose here and in \cite{SONG4}.

\begin{definition}\label{D:additive} We say that an integer valued function $f$ on the set of graphs is {\it additive} if $f(G + G') = f(G) + f(G')$ for any two graphs $G, G'$ with disjoint vertex sets.
\end{definition}

For example, order, size and rank (of the incidence matrix) are all additive.

\begin{definition}\label{D:homogen} We say that  a graph property $P$ is {\it homogeneous} if $G$ satisfies $P$ then all of its components satisfy $P$.
\end{definition}

Let $f_1, \dots, f_s$ be additive functions on graphs. Say one is interested in enumerating graphs with given order and given values of $f_i$, $i = 1, \dots, s$. Then the exponential formula (see for instance, \cite{StanleyV2} allows one to relate the number of graphs with the desired numerical properties and the number of connected graphs with the same properties. Let $[n] = \{1,2, \dots, n\}$.

\begin{theorem}(\cite[Theorem~1]{Read}, \cite{StanleyV2}) \label{T:Read} Let $P$ be a homogeneous graph property. Let $g_{n,k_1,\dots,k_s}$ (resp. $\bar g_{n,k_1,\dots,k_s}$) be the number of (resp. connected) graphs $G$ on $[n]$ with property $P$ and $f_i(G) = k_i$, $\forall i$. Then the generating function for $g_{n,k_1,\dots, k_s}$ is given by
\[
g(x,y_1,\dots,y_s) = \exp\left(\sum \bar g_{n,k_1,\dots,k_s} \frac1{n!} x^n y_1^{k_1}\cdots y_s^{k_s} \right)
\]
or equivalently, the generating function for $\bar g_{n,k_1,\dots,k_s}$ is given by the formal logarithm
\[
\overline g(x,y) = \log \left(\sum g_{n,k_1,\dots,k_s} \frac1{n!} x^n y_1^{k_1}\cdots y_s^{k_s} \right).
\]
\end{theorem}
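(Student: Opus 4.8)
The plan is to prove the multivariable exponential formula of Theorem~\ref{T:Read} by reducing it to the standard single-variable exponential formula for species (or the classical one in \cite{StanleyV2}) via a bookkeeping argument that encodes all the additive data $(n, k_1, \dots, k_s)$ into a single multigraded weight. First I would observe that since $P$ is homogeneous, a graph $G$ on $[n]$ satisfies $P$ if and only if every connected component of $G$ does; this is exactly the compatibility one needs so that ``graphs with property $P$'' form a structure closed under disjoint union and decomposition into connected pieces. Thus the class of $P$-graphs is the ``set of connected $P$-graphs,'' in the language of the exponential formula.

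Next I would record the crucial multiplicativity: each $f_i$ is additive, so under a disjoint union $G = G_1 + \dots + G_t$ of connected components we have $f_i(G) = \sum_j f_i(G_j)$, while the order is likewise additive, $n = \sum_j |G_j|$. This means the monomial weight $w(G) := x^{|G|} y_1^{f_1(G)} \cdots y_s^{f_s(G)}$ is \emph{multiplicative} over disjoint unions, $w(G) = \prod_j w(G_j)$. The plan is then to apply the exponential formula in the form: if connected structures are weighted by $w$ and we sum the exponential generating function (in the variable recording the number of labeled vertices) over all connected $P$-graphs, then exponentiating yields the generating function summing $w$ over \emph{all} $P$-graphs, with the $1/n!$ accounting for the labeling. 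Concretely, I would set $\overline g(x, y) = \sum \bar g_{n,k_1,\dots,k_s} \frac{1}{n!} x^n y_1^{k_1} \cdots y_s^{k_s}$ and verify that $\exp \overline g$, when its coefficient of $\frac{1}{n!} x^n \prod y_i^{k_i}$ is extracted, counts $g_{n,k_1,\dots,k_s}$.

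The heart of the verification is the coefficient-extraction combinatorial identity. Expanding $\exp \overline g = \sum_{t \ge 0} \frac{1}{t!} \overline g^{\,t}$, the degree-$n$ term in $x$ arising from the $t$-fold product corresponds to choosing an ordered list of $t$ connected $P$-graphs whose orders sum to $n$ and whose $f_i$-values sum to $k_i$; the $1/t!$ removes the ordering, and the product of the $1/(n_j!)$ factors together with the multinomial coefficient $\binom{n}{n_1, \dots, n_t}$ (which appears when one merges the separately-labeled vertex sets into a single labeling on $[n]$) collapses to the single factor $1/n!$. This is precisely the standard exponential-formula bijection between a $P$-graph on $[n]$ and the set partition of $[n]$ into its connected components, each decorated by a connected $P$-graph structure; additivity of the $f_i$ guarantees the $y$-exponents add correctly across the partition blocks. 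I expect the main obstacle to be purely bookkeeping: carefully matching the multinomial relabeling factors with the $1/n!$ normalization across all $s+1$ graded variables simultaneously, and confirming that homogeneity of $P$ is exactly what lets every summand on the right correspond to a genuine $P$-graph. The equivalent logarithmic form then follows immediately by inverting, using that the constant term of $\overline g$ is zero so that $\log$ is a well-defined formal power series operation.
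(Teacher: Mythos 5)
Your proposal is correct and takes essentially the same route the paper relies on: the paper states Theorem~\ref{T:Read} with citations rather than proving it, but its own proof of Theorem~\ref{T:enumeration-nu} is exactly this argument --- expanding the exponential, matching the multinomial relabeling coefficient against the $1/n_j!$ normalizations so that everything collapses to $1/n!$, and dividing out the orderings of components --- just carried out with the extra variables $z_n$. One small caution: your opening ``if and only if'' step uses closure of $P$ under disjoint union, i.e.\ the converse of Definition~\ref{D:homogen} as literally stated; this converse is genuinely required (otherwise the expansion of $\exp \overline g$ would count disjoint unions that need not satisfy $P$, e.g.\ $P=$ ``connected'' satisfies the one-directional definition but violates the formula), and you were right to invoke it, since it is clearly the intended reading of homogeneity.
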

By using this theorem, one can also sort out unwanted components when enumerating graphs. Suppose that the generating function $g(x,y_1,\dots,y_s)$ as in Theorem~\ref{T:Read} is known and we want to enumerate the graphs with the same numerical features but without certain components.

\begin{corollary}\label{C:remove-components} Let $\mathcal I \subset \mathbb Z^{s+1}$. The number of graphs of given order and given values of $f_i$ without a connected component of order $n^*$ and $f_i$ value of $k_i^*$ is generated by
\[
\exp\left( \log \left(g(x,y_1,\dots,y_s) - \sum_{(n^*,k_i^*) \in \mathcal I}\overline g_{n^*,k_1^*,\dots,k_s^*}\frac{1}{n^*!}x^{n^*}y_1^{k_1^*}\cdots y_s^{k_s^*} \right) \right)
\]
\end{corollary}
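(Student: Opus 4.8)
The plan is to read the corollary as two applications of the exponential formula (Theorem~\ref{T:Read}): first to recognize $\log g$ as the generating function counting \emph{connected} graphs, and then to re-exponentiate after deleting the connected components we wish to forbid.

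First I would invoke Theorem~\ref{T:Read} to write
\[
\log g(x,y_1,\dots,y_s) = \sum \overline g_{n,k_1,\dots,k_s}\frac{1}{n!}x^n y_1^{k_1}\cdots y_s^{k_s},
\]
so that $\log g$ is precisely the exponential generating function enumerating connected graphs on $[n]$ with property $P$ and $f_i = k_i$. Subtracting the terms indexed by $\mathcal I$ then yields the series
\[
C_{\mathcal I} := \log g - \sum_{(n^*,k_i^*)\in\mathcal I}\overline g_{n^*,k_1^*,\dots,k_s^*}\frac{1}{n^*!}x^{n^*}y_1^{k_1^*}\cdots y_s^{k_s^*},
\]
which is exactly the generating function for the subcollection of connected graphs whose parameter tuple $(n,k_1,\dots,k_s)$ does \emph{not} lie in $\mathcal I$; that is, the connected graphs that are permitted as components.

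Next I would apply Theorem~\ref{T:Read} in the forward direction to $C_{\mathcal I}$. A graph has no component with parameters in $\mathcal I$ precisely when each of its connected components is one of the permitted connected graphs counted by $C_{\mathcal I}$. Since $P$ is homogeneous (Definition~\ref{D:homogen}) and each $f_i$ is additive (Definition~\ref{D:additive}), a disjoint union of permitted components again satisfies $P$, and its $f_i$-values are the sums of the components' values; hence this subclass is closed under the disjoint-union operation governing the exponential formula. Therefore $\exp(C_{\mathcal I})$ enumerates exactly the desired graphs, which is the stated formula.

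The only point requiring care is the last step: that the exponential formula may be re-applied to the \emph{restricted} connected class $C_{\mathcal I}$ rather than to the full connected generating function. This is legitimate precisely because forbidding components by their $(n,k_1,\dots,k_s)$-values defines a subcollection that remains closed under disjoint union — a consequence of homogeneity of $P$ and additivity of the $f_i$ — so no interaction between permitted and forbidden components can arise. Once this closure is noted, the corollary follows with no further computation.
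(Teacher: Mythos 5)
Your proof is correct and follows essentially the same route as the paper's: subtract the forbidden terms from the connected series $\overline g=\log g$ and re-exponentiate. The paper's own justification is a one-line coefficient extraction---in $\exp$ of the truncated connected series, the $x^n\prod y_i^{k_i}$-coefficient is a sum over partitions of $(n,k_1,\dots,k_s)$ into parts avoiding the tuples in $\mathcal I$---whereas you package the same fact as a second application of Theorem~\ref{T:Read}, to the property ``$P$ and no component with parameters in $\mathcal I$''; your closure observation is exactly the check that this modified property is again homogeneous, a point the paper leaves implicit, and this makes your write-up somewhat more rigorous than the paper's terse proof. Two small remarks. First, closure under disjoint union is not literally implied by Definition~\ref{D:homogen}, which only asserts that components of a $P$-graph satisfy $P$; the converse direction is tacitly assumed in Theorem~\ref{T:Read} itself (it is needed for $g=\exp\overline g$ to hold at all), so your use of it is consistent with the paper's conventions, though it would be worth saying so explicitly. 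Second, you have silently---and correctly---repaired the parenthesization of the statement: as displayed, $\exp\left(\log\left(g-\sum\cdots\right)\right)$ collapses formally to $g-\sum\cdots$, which is not the right count (with isolated vertices forbidden, $g-x$ still counts the edgeless graph on two vertices); the formula your argument actually proves, and the one the paper clearly intends (compare its special case $\exp\left(\log\left(g-x\right)\right)$, which should read $\exp\left((\log g)-x\right)$), is $\exp\left((\log g)-\sum_{(n^*,k_i^*)\in\mathcal I}\overline g_{n^*,k_1^*,\dots,k_s^*}\frac{1}{n^*!}x^{n^*}y_1^{k_1^*}\cdots y_s^{k_s^*}\right)$.
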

\begin{proof} By construction,  the $x^n\prod y_i^{k_i}$-coefficient comes from the partitions of $n$ and $k_i$ that do not involve $n^*$ and $k_i^*$. The assertion follows immediately.
\end{proof}
In particular, the number of graphs with desired numerical properties but without an isolated vertex is generated by
$\exp\left( \log \left(g(x,y_1,\dots,y_s) - x\right)\right)$.

\section{Graphs with a given number of weighted connected components}\label{S:main}
Suppose we have the generating function $g(x,y) = \sum_{n\ge 1,k_i} g_{n,k_1,\dots,k_s} \frac1{n!} x^n \prod_{i=1}^s y_i^{k_i}$ for the number of  connected graphs with certain homogeneous properties $\mathcal P$ and given numerical values of $f_i$, $i=1,\dots, s$.

To control the weighted number of connected components, we define an auxiliary generating function
\begin{equation}\label{E:aux}
\exp \, \tilde g(x,y,z) =\exp \sum_{n, k_i} g_{n,k_1,\dots,k_s} \frac1{n!} x^n \prod_{i=1}^s y_i^{k_i}z_n \in \bQ[x,y_1, \dots, y_s, z_1, z_2, z_3\dots, ]
\end{equation}
which involves infinitely many variables $z_i$, $i \in \bN$.
The upshot is that the auxiliary $z_i$'s allow one to keep track of the orders of connected components.

\begin{definition} \label{D:tau} Let $\omega  = (\omega_1, \omega_2, \omega_3, \dots) \in \bQ^\infty$ and define
$\tau_\omega: \bQ[z_1, z_2, z_3, \dots] \to \overline{\bQ(z)}$ to be  the ring homomorphism determined by
\[
z^\alpha := \prod z_i^{\alpha_i} \mapsto z^{\omega.\alpha} = z^{\sum \omega_i\alpha_i}.
\]
\end{definition}
Here, $\overline{\bQ(z)}$ denotes the algebraic closure of the function field $\bQ(z)$, employed just to make sure that $\tau_\omega$ is a well-defined ring homomorphism. In practice, computations involving the fractional powers of $z$ are done purely symbolically.

\begin{theorem} \label{T:enumeration-nu}
The number of graphs with properties $\mathcal P$, given $f_i$ values and $h_0^\omega$ is generated by
$
\tau_\omega (\exp \tilde g).
$
That is, if
\[
\tau_\omega (\exp \tilde g) = \sum  T_{n,k,\nu} \frac{1}{n!} x^n y^k z^\nu, \quad y^k = \prod_{i=1}^s y_i^{k_i}
\]
 then $T_{n,k,\nu}$ is precisely the number of graphs with properties $\mathcal P$, $f_i$-value $k_i$ and $h^0_\omega = \nu$.
\end{theorem}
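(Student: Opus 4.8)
The plan is to first read off the combinatorial meaning of the coefficients of $\exp\tilde g$, and then to verify that the homomorphism $\tau_\omega$ converts the bookkeeping done by the variables $z_n$ into bookkeeping of $h_0^\omega$. The whole statement then follows by regrouping monomials according to the power of $z$.

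First I would recognize the auxiliary series as an instance of Theorem~\ref{T:Read}. For each $j\ge 1$ let $c_j(G)$ be the number of connected components of $G$ of order $j$. Each $c_j$ is integer valued and additive in the sense of Definition~\ref{D:additive}, since the components of a disjoint union are the union of the components, and a connected graph of order $n$ satisfies $c_j=\delta_{j,n}$. Hence, if I enlarge the list of marking variables from $y_1,\dots,y_s$ to $y_1,\dots,y_s,z_1,z_2,\dots$, with $z_j$ marking $c_j$, the generating function for connected graphs with property $\mathcal P$ becomes exactly $\tilde g$, because a connected graph of order $n$ contributes the single factor $z_n$. Applying Theorem~\ref{T:Read} with these markings gives that $\exp\tilde g$ generates all graphs with property $\mathcal P$: the coefficient of $\frac1{n!}x^n\prod_i y_i^{k_i}\prod_j z_j^{m_j}$ in $\exp\tilde g$ is precisely the number $N_{n,k,(m_j)}$ of graphs $G$ on $[n]$ with property $\mathcal P$, $f_i(G)=k_i$ for all $i$, and $c_j(G)=m_j$ for all $j$. (Infinitely many marking variables cause no difficulty: for a fixed order $n$ only the $z_j$ with $j\le n$ can occur, so each $x^n$-coefficient is a genuine polynomial in the $z_j$'s.)

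Next I would apply $\tau_\omega$. Because every $x^ny^k$-coefficient of $\exp\tilde g$ is a polynomial in the $z_j$'s, the homomorphism of Definition~\ref{D:tau} may be applied term by term, treating $x$ and the $y_i$ as scalars. The key identity is that for a graph $G$ with $c_j(G)=m_j$ components of order $j$ we have, by the definition of $h_0^\omega$, $h_0^\omega(G)=\sum_j\omega_j m_j$, so that $\tau_\omega\left(\prod_j z_j^{m_j}\right)=z^{\sum_j\omega_j m_j}=z^{h_0^\omega(G)}$; this matching is exactly why $\tau_\omega$ was defined as in Definition~\ref{D:tau}. Therefore
\[
\tau_\omega(\exp\tilde g)=\sum_{n,k,(m_j)} N_{n,k,(m_j)}\,\frac1{n!}\,x^n y^k z^{\sum_j\omega_j m_j}.
\]
Regrouping the monomials according to the exponent of $z$, the coefficient of $\frac1{n!}x^n y^k z^\nu$ becomes $\sum_{(m_j):\,h_0^\omega=\nu} N_{n,k,(m_j)}$, the sum being over all component profiles $(m_j)$ with $\sum_j j m_j=n$ and $\sum_j\omega_j m_j=\nu$. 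This is exactly the number of graphs on $[n]$ with property $\mathcal P$, $f_i$-value $k_i$ and $h_0^\omega=\nu$, which is the claimed $T_{n,k,\nu}$.

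I expect the main obstacle to be the first step: justifying the refined form of the exponential formula with the extra family of order-marking variables $z_j$, and checking that this refinement is legitimate, i.e. that the additive quantities $c_j$ interact correctly with the $\frac1{n!}$ normalization and that Theorem~\ref{T:Read} extends to infinitely many marking variables. Once that refinement is in hand, the action of $\tau_\omega$ and the final regrouping are purely formal; the only point there that requires a word of care is the well-definedness of applying the single-variable-valued homomorphism $\tau_\omega$ to an infinite power series, which is guaranteed by the finiteness of each $x^n y^k$-coefficient noted above.
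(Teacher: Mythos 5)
Your proposal is correct, but it takes a genuinely different route from the paper's own proof. The paper proves Theorem~\ref{T:enumeration-nu} from scratch: it fixes a component profile as in Equation~(\ref{E:partition}), counts the graphs having exactly $m_{ij}$ components of order $n_i$ and $f$-value $k_{ij}$ by a direct multinomial argument (choose the vertex sets $V_{ijr}$, place one of $g_{n_i,k_{ij}}$ connected graphs on each, divide by $\prod m_{ij}!$), then separately extracts the coefficient of $\prod (z_{n_i}y^{k_{ij}}x^{n_i})^{m_{ij}}$ from the product expansion of $\exp\tilde g$ — including a matching step showing a general term of the product can only arise from the same partition — and finally verifies that the two counts agree after applying $\tau_\omega$. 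You instead recognize $\tilde g$ as the connected-graph series of Theorem~\ref{T:Read} once the marking family is enlarged by the additive statistics $c_j$ (number of components of order $j$), for which a connected graph of order $n$ contributes exactly the factor $z_n$; then $\exp\tilde g$ enumerates all graphs jointly by $(n,k,(m_j))$ by the exponential formula itself, and the theorem reduces to formal bookkeeping under $\tau_\omega$. Your route is shorter and more modular — it exposes Equation~(\ref{E:aux}) as nothing more than the exponential formula with extra marks — and you correctly flag and dispatch the one new issue it creates, namely the extension to infinitely many marking variables (harmless, since the $x^n$-coefficient involves only $z_j$ with $j\le n$). What the paper's route buys is self-containedness (it re-derives the needed instance of the exponential formula rather than citing it) and an explicit coefficient computation, Equation~(\ref{E:general-term}), that doubles as a sanity check. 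One notational caveat: your identity $h_0^\omega(G)=\sum_j \omega_j m_j$ conflicts with the introduction's displayed definition $h_0^\omega(G)=\sum_i \omega_{|G_i|}|G_i|$, which as written would give $\sum_j j\,\omega_j m_j$ and would make the trivial weight return the order rather than the number of components; but the paper's own proof takes $\nu=\sum_i(\omega_i\sum_j m_{ij})$, i.e. the same convention you use, so you have matched the intended definition and the definition of $\tau_\omega$, and the discrepancy lies in the paper's introduction rather than in your argument.
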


\begin{proof} We shall give a proof for the case $s = 1$ (single numerical feature).  One will see immediately that the general case can be proved in the exact same fashion, but it is just more cumbersome to write.
Fix $\nu$ and choose a set of partitions
\begin{equation}\label{E:partition}
\begin{array}{c}
n = \sum_{i=1}^\ell (\sum_j m_{ij}) n_i, \quad k = \sum_{i,j} m_{ij} k_{ij}, \quad \mbox{such that $\nu = \sum_i (\omega_i \sum_j m_{ij})$},
\\
\mbox{$n_1 < n_2 < \cdots < n_\ell $ and $k_{i1} < k_{i2} < \cdots $, $\forall i$.}
\end{array}
\end{equation}
We first count the number of graphs with the properties $\mathcal P$ which has precisely $m_{ij}$  connected components of order $n_i$ and numerical value $k_{ij}$. We first choose $m_i = \sum_j m_{ij}$ many sets $V_{ijr}$ of vertices of cardinality $n_i$ out of $[n]$.
Note that we have triply indexed the vertex sets since for each fixed $i$ and $j$, we need to choose $m_{ij}$ many sets of vertices i.e. $V_{ij1}, V_{ij2}, \dots, V_{ijm_{ij}}$.
 There are $\binom n{\underbrace{n_1 \cdots n_1}_{m_1} \cdots \underbrace{n_\ell \cdots n_\ell}_{m_\ell}}$ ways to do this.
 Once we have chosen $V_{ijr}$, on each $V_{ijr}$, there are $g_{n_i,k_{ij}}$ many ways to draw the graphs (connected components) on $V_{ijr}$ with the desired properties.
If the last index of $V_{ij1}, \dots, V_{ijm_{ij}}$  mattered, then we would simply have $g_{n_i,k_{ij}}^{m_{ij}}$ many graphs on $V_{ijr}$.
It does not, so we divide out by $m_{ij}!$ to remove the repetition.
All in all, there are
\begin{equation}\label{E:number-components}
\sum \binom n{\underbrace{n_1 \cdots n_1}_{m_1} \cdots \underbrace{n_\ell\cdots n_\ell}_{m_\ell}} \frac1{\prod_{i,j} m_{ij}!} \prod g_{n_i,k_{ij}}^{m_{ij}}
\end{equation}
many graphs with $\ell$ components, where the sum runs over all partitions of $n$ and $k$ as in Equation~(\ref{E:partition})(note that $\ell$ is fixed here).

On the other hand, consider $\exp \tilde g = \prod \exp\left( g_{n,k} \frac1{n!} z_n y^kx^n\right)$.
We shall compute its $\prod (z_{n_i}y^{k_{ij}}x^{n_i})^{m_{ij}}$ coefficient where $\{n_i, k_{ij}\}$ gives a partition as in Equation~(\ref{E:partition}).
By definition of $\prod \exp\left( g_{n,k} \frac1{n!} z_n y^k x^n\right)$, the product is over the partitions Equation~(\ref{E:partition}). Consider a general term
\[
\prod_{u,v} \left(z_{n'_u} y^{k_{uv}'} x^{n'_u} \right)^{m_{uv}'}
\]
coming from a partition $n = \sum_{u,v} m'_{uv}n'_u$ and $k = \sum_{u,v} m'_{uv} k'_{uv}$.
Suppose that it is equal to $\prod (z_{n_i} y^{k_{ij}}x^{n_i})^{m_{ij}}$.
Then by considering the $z_n$ factor, we conclude that for each $u$ and $v$, $n'_u = n_i$ and $m'_{uv} \le m_{ij}$.
By symmetry,  we conclude that the two sets of partitions $(n = \sum m_{ij}n_i, k = \sum m_{ij} k_{ij})$ and $(n = \sum m'_{uv}n'_u, k = \sum m'_{uv}k'_{uv})$ are the same.
Hence the $\prod (z_{n_i}y^{k_{ij}}x^{n_i})^{m_{ij}}$ coefficient is simply the product of the coefficients of $z_{n_i}y^{k_{ij}}x^{n_i}$:
\begin{equation}\label{E:general-term}
\prod \frac1{m_{ij}!}  \left(\frac1{(n_i)!} g_{n_i,k_{ij}}\right)^{m_{ij}} = \frac1{(n_1!)^{m_1} \cdots (n_\ell!)^{m_\ell}} \prod \frac1{m_{ij}!} \prod g_{n_i,k_{ij}}^{m_{ij}}.
\end{equation}


Let $\tau_\omega$ be as in Definition~\ref{D:tau} but extended to the rings with $\bQ$ replaced by $\bQ[x,y_1,\dots,y_s]$.
We have
\[
\tau_\omega\left(\prod (z_{n_i} y^{k_{ij}} x^{n_i})^{m_{ij}}\right) = z^{\sum_i (\omega_i \sum_jm_{ij})} \prod y^{\sum m_{ij} k_{ij}} x^{\sum m_{ij}n_i}= z^{\sum_i (\omega_i \sum_j m_{ij})}y^kx^n.
\]
Hence the $x^n y^k z^\nu$-coefficient of $\tau_\omega(\exp\tilde g)$ is
\[
\sum \left(\mbox{$\prod (z_{n_i} y^{k_{ij}}x^{n_i})^{m_{ij}}$ coefficient of $\exp\tilde g$} \right)
\]
where the sum runs over all partitions as in Equation~(\ref{E:partition}).
Substituting Equation~(\ref{E:general-term}), we obtain
\[
T_{n,k,\nu} = \sum \frac{n!}{(n_1!)^{m_1} \cdots (n_\ell!)^{m_\ell}} \prod \frac1{m_{ij}!} \prod g_{n_i,k_{ij}}^{m_{ij}}
\]
which precisely equals Equation~(\ref{E:number-components}).
\end{proof}

\section{Application: bipartite graphs of given order, size and rank}

Our motivation for studying the enumerative combinatorics of bipartite graphs comes from certain hyperplane arrangements \cite{SONG1,SONG2}, but they are certainly interesting on their own \cite{Harary58, Harary63, Harary73, Hanlon, Ueno}. Following these earlier works, we shall first enumerate bi-colored graphs and from it,  we shall obtain a method for enumerating bipartite graphs of given order and size.

\begin{definition} A graph is {\it bi-colored} if its vertices are colored black and white so that no two adjacent vertices have the same color.
\end{definition}

The number of bi-colored graphs of order $n$ and size $k$ is easy to enumerate: any such graph has precisely two blocks of orders, say $i$ and $n-i$. Assume one is colored black and the other, white. There are $i(n-i)$ possible edges between the two blocks. Hence the total number of bi-colored graphs of order $n$ and size $k$ is
\[
\sum_{i = 0}^n \binom ni\binom{i(n-i)}{k}.
\]
The number of {\it connected} bi-colored graphs of given order and size is then generated by the formal logarithm
\[
\log \left(1 + \sum_{n\ge 1,k\ge 0} \left(\sum_i \binom ni\binom{i(n-i)}k \frac1{n!} x^n y^k\right)\right).
\]
Since a connected bipartite graph admits precisely two bi-colorings, the number of connected bipartite graphs of given order and size is exactly half of the  number of connected bi-colored graphs of the same order and size. That is, if we let
 $\bar b_{n,k}$ denote the number of connected bipartite graphs of order $n$ and size $k$, then we have
 \[
\mathcal B(x,y) :=  \sum_{n\ge 0, k\ge 0} \bar b_{n,k} \frac1{n!}x^ny^k = \frac12 \log \left(
 1+ \sum_{n\ge 1,k\ge 0} \left(\sum_i \binom ni\binom{i(n-i)}k \frac1{n!} x^n y^k\right)\right).
 \]
By Theorem~\ref{T:enumeration-nu}, we have
\begin{corollary}\label{C:bipartite} The number of bipartite  graphs of given order, size and number of connected components is given by
$
\tau_{\omega} (\exp \tilde B(x,y,z))
$
with $\omega = (1,1,\dots)$.
That is, if
\[
\tau_\omega (\exp \tilde{\mathcal B}) = \sum  b_{n,k,\nu} \frac{1}{n!} x^n y^k z^\nu,
\]
 then $b_{n,k,\nu}$ is precisely the number of bipartite graphs of order $n$, size $k$ with $\nu$ connected components.\end{corollary}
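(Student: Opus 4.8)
The plan is to exhibit Corollary~\ref{C:bipartite} as the special case of Theorem~\ref{T:enumeration-nu} obtained by taking the homogeneous property $\mathcal P$ to be ``bipartite'', the single additive feature ($s=1$) to be $f_1 = $ size, and the weight to be $\omega = (1,1,\dots)$. This is precisely the case $s=1$ for which that theorem was proved. Once the hypotheses are checked and $\bar b_{n,k}$ is identified with the connected-graph count $g_{n,k}$ of Theorem~\ref{T:enumeration-nu}, the statement is immediate: $\tilde{\mathcal B}$ is the auxiliary series $\tilde g$ of Equation~(\ref{E:aux}) for this data, so $\tau_\omega(\exp\tilde{\mathcal B})$ generates exactly the desired graphs.

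First I would verify the two structural hypotheses of Section~\ref{S:main}. Bipartiteness is homogeneous in the sense of Definition~\ref{D:homogen}: a graph is bipartite if and only if each connected component is, since a $2$-coloring of the whole graph restricts to each component, and conversely independent $2$-colorings of the components assemble into one of the whole graph. Size is additive in the sense of Definition~\ref{D:additive}, the edge set of a disjoint union being the disjoint union of the edge sets.

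Next I would confirm that $\bar b_{n,k}$ really is the number of connected bipartite graphs of order $n$ and size $k$, so that $\mathcal B(x,y)$ plays the role of $g(x,y)$. This is the content of the bi-coloring discussion preceding the corollary: the exponential formula (Theorem~\ref{T:Read}) converts the explicit count $\sum_i\binom ni\binom{i(n-i)}k$ of bi-colored graphs into the logarithmic expression for connected bi-colored graphs, and the factor $\frac12$ passes from bi-colored to bipartite counts. The one point deserving care — the main, though mild, obstacle — is this factor of $\frac12$: it is legitimate precisely because a connected bipartite graph has a unique bipartition up to interchanging the two color classes, hence exactly two bi-colorings, so that halving yields the integer counts $\bar b_{n,k}$. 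With this identification, $\tilde{\mathcal B}(x,y,z) = \sum_{n,k}\bar b_{n,k}\frac1{n!}x^ny^kz_n$ is the series of Equation~(\ref{E:aux}).

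Finally I would specialize the weight. For $\omega = (1,1,\dots)$, Definition~\ref{D:tau} sends $\prod z_i^{\alpha_i}$ to $z^{\sum\alpha_i}$, so the exponent $\sum_i(\omega_i\sum_j m_{ij})$ appearing in the proof of Theorem~\ref{T:enumeration-nu} collapses to the total number $\sum_{i,j}m_{ij}$ of connected components. Thus $h_0^\omega$ becomes the ordinary number of connected components, and Theorem~\ref{T:enumeration-nu} identifies $b_{n,k,\nu} = T_{n,k,\nu}$ with the number of bipartite graphs of order $n$, size $k$ and $\nu$ components, which is the assertion.
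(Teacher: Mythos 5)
Your proposal is correct and follows essentially the same route as the paper, which states the corollary as an immediate application of Theorem~\ref{T:enumeration-nu} to the connected bipartite counts $\bar b_{n,k}$ established in the preceding bi-coloring discussion. Your explicit verifications --- that bipartiteness is homogeneous, that size is additive, and that the factor $\frac12$ is justified by the unique-bipartition property of connected bipartite graphs --- merely make precise the steps the paper leaves implicit.
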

The generating function $\tau_\omega(\exp \tilde B)$ can be easily computed by computer algebra systems such as Mathematica. In the subsequent section, we shall list the first $100$ or so terms of the generating function (order of the graph up to 10).

\appendix
\section*{Appendices}
\addcontentsline{toc}{section}{Appendices}
\renewcommand{\thesubsection}{\Alph{subsection}}

\subsection{Numerical results}
\subsubsection{The generating function for the number of bipartite graphs of given order, size and number of connected components (Corollary~\ref{C:bipartite}) as computed by Mathematica}

\small
\[
\begin{array}{l}
\tau_\omega(\exp \tilde{\mathcal B}) =  1+ \frac{1}{2} x^2 y z + \frac{1}{2} x^3 y^2 z + x^4 \left(\frac{y^2 z^2}{8}+\frac{2 y^3 z}{3}+\frac{y^4 z}{8}\right) +x^5 \left(\frac{y^3 z^2}{4} +\frac{25 y^4
   z}{24} +\frac{y^5 z}{2}+ \frac{y^6 z}{12}\right) + \\
 x^6 \left(\frac{y^3 z^3}{48} +\frac{11 y^4 z^2}{24} +y^5
   \left(\frac{z^2}{16}+\frac{9 z}{5}\right) +\frac{19 y^6 z}{12}+ \frac{2 y^7 z}{3} +\frac{7 y^8 z}{48}+ \frac{y^9 z}{72}\right)+\\
x^7
   \left(\frac{y^4 z^3}{16} +\frac{41 y^5 z^2}{48}  +y^6 \left(\frac{5 z^2}{16}+\frac{2401
   z}{720} \right) +y^7 \left(\frac{z^2}{24}+\frac{55 z}{12}\right)  +\frac{10 y^8 z}{3}  +\frac{37 y^9 z}{24}  +\frac{37 y^{10} z}{80}\right)+ \\
 x^8 \left(\frac{z y^{16}}{1152}+\frac{11 z y^{15}}{720}+\frac{z y^{14}}{8}+\frac{91 z y^{13}}{144}+\frac{1583 z y^{12}}{720}+\frac{1327 z
   y^{11}}{240}+\left(\frac{z^2}{144}+\frac{491 z}{48}\right) y^{10} + \right. \\
 \left. \left(\frac{7 z^2}{96}+\frac{124 z}{9}\right) y^9+\left(\frac{49 z^2}{128}+\frac{611 z}{48}\right) y^8+\left(\frac{9
   z^2}{8}+\frac{2048 z}{315}\right) y^7+\left(\frac{z^3}{64}+\frac{1183 z^2}{720}\right) y^6+\frac{7 z^3 y^5}{48}+\frac{z^4 y^4}{384}\right) +\\
x^9   \left(\frac{z y^{20}}{2880}+\frac{z y^{19}}{144}+\frac{143 z
   y^{18}}{2160}+\frac{2 z y^{17}}{5}+\frac{2471 z y^{16}}{1440}+\frac{133 z y^{15}}{24}+\frac{140281 z y^{14}}{10080}+\frac{1}{9} \left(\frac{z^2}{32}+\frac{9947 z}{40}\right) y^{13}+ \right. \\
   \frac{1}{9}
   \left(\frac{3 z^2}{8}+\frac{31357 z}{80}\right) y^{12}+\frac{1}{9} \left(\frac{343 z^2}{160}+\frac{7779 z}{16}\right) y^{11}+\frac{1}{9} \left(\frac{123 z^2}{16}+\frac{14763 z}{32}\right)
   y^{10}+\frac{1}{9} \left(\frac{305 z^2}{16}+\frac{24903 z}{80}\right) y^9 +\\
\left.   \frac{\left(140 z^3+47670 z^2+177147 z\right) y^8}{13440}+\frac{\left(135 z^3+4697 z^2\right) y^7}{1440}+\frac{61 z^3
   y^6}{192}+\frac{z^4 y^5}{96}\right) + \\
x^{10}    \left(\frac{1771 z y^{20}}{720}+\frac{4129 z y^{19}}{480}+\frac{2927 z y^{18}}{120}+\frac{1}{10} \left(\frac{5
   z^2}{1152}+\frac{164105 z}{288}\right) y^{17}+\frac{1}{10} \left(\frac{11 z^2}{144}+\frac{2976223 z}{2688}\right) y^{16}+ \right.\\
   \frac{1}{10} \left(\frac{5 z^2}{8}+\frac{339671 z}{189}\right)
   y^{15} +
  \frac{1}{10} \left(\frac{115 z^2}{36}+\frac{38943 z}{16}\right) y^{14}+\frac{1}{10} \left(\frac{1097 z^2}{96}+\frac{98255 z}{36}\right) y^{13}+ \\
  \frac{1}{10} \left(\frac{52303
   z^2}{1728}+\frac{89665 z}{36}\right) y^{12}+\frac{1}{10} \left(\frac{z^3}{96}+\frac{3661 z^2}{72}+\frac{1}{8} \left(\frac{z^2}{18}+\frac{491 z}{6}\right) z+
   \frac{16070 z}{9}\right) y^{11}+ \\
   \frac{1}{10}
   \left(\frac{7 z^3}{64}+\frac{5783 z^2}{72}+\frac{1}{8} \left(\frac{7 z^2}{12}+\frac{992 z}{9}\right) z+\frac{16709 z}{18}\right) y^{10}+\frac{1}{10} \left(\frac{17 z^3}{32}+\frac{5743
   z^2}{72}+\frac{1}{8} \left(\frac{49 z^2}{16}+\frac{611 z}{6}\right) z+ \right.\\
 \left.  \frac{156250 z}{567}+\frac{5}{48} \left(z^3+50 z^2\right)+\frac{1}{16} \left(z^3+110 z^2\right)\right) y^9+\frac{\left(15330
   z^3+268559 z^2\right) y^8}{40320}+\frac{\left(15 z^4+3916 z^3\right) y^7}{5760}+\\
 \left. \frac{17 z^4 y^6}{576}+\frac{z^5 y^5}{3840}\right) + \cdots
\end{array}
\]

\small


\begin{table}[]
\centering
\caption{Numer of bipartite graphs of given (order, size, number of components)}
\label{my-label}
\begin{tabular}{|c|c|c|c|c|c|c|c|c|c|c|c|c}
\hline
2,1,1       & 3,2,1       & 4,3,1      & 4,4,1      & 4,2,2      & 5,4,1     & 5,5,1  \\   \hline
1           & 3           & 16         & 3          & 3          & 125       & 60      \\   \hline  \hline
5,6,1   & 5,3,2   & 6,5,1    & 6,6,1       & 6,7,1       & 6,8,1      & 6,9,1     \\  \hline
 10      & 30      & 1296     & 1140        & 480         & 105        & 10      \\  \hline \hline
 6,10,1     & 6,4,2     & 6,5,2     & 6,3,3   & 7,6,1   & 7,7,1     & 7,8,1     \\  \hline
    0          & 330       & 45        & 15      & 16,807  & 23,100    & 16,800   \\     \hline \hline
  7,9,1       & 7,10,1     & 7,5,2      & 7,6,2      & 7,7,2     & 7,4,3   & 8,7,1 \\   \hline
7,770       & 2,331      & 4,305      & 1,575      & 210       & 315       & 262,144 \\  \hline \hline
8,8,1   & 8,9,1     & 8,10,1      & 8,6,2       & 8,7,2      & 8,8,2 & 8,9,2  \\      \hline
513,240 & 555,520   & 412,440     & 66,248      & 45,360     & 15,435     & 2,940  \\     \hline \hline
 8,10,2    & 8,5,3     & 8,6,3   & 8,4,4   & 9,8,1     & 9,9,1 & 9,10,1  \\    \hline
280       & 5,880     & 630     & 105     & 4,782,969  & 12,551,112  & 18,601,380  \\  \hline   \hline
9,7,2      & 9,8,2      & 9,9,2      & 9,10,2    & 9,6,3     & 9,7,3  & 9,8,3 \\ \hline
1,183,644  & 1,287,090  & 768,600    & 309,960   & 115,290   & 34,020  & 3,780  \\    \hline \hline
9,5,4     &10,9,1      & 10,10,1     & 10,8,2     & 10,9,2     & 10,10,2  & 10,7,3 \\ \hline
3,780     & 100,000,000 & 336,853,440 & 24,170,310 & 37,948,680 & 34,146,000 & 2,467,080 \\  \hline \hline
10,8,3    & 10,9,3  & 10,10,3 & 10,6,4   & 10,7,4      & 10,5,5    &   \\   \hline
 1,379,700 & 392,175 & 66,150  & 107,100   & 9,450       & 945     &  \\ \hline
\end{tabular}
\end{table}
A missing triple $(n,k,\nu)$ (such as $(3,1,1)$) means there are no bipartite graphs of order $n$, size $k$ with $\nu$ connected components.
\bibliographystyle{alpha}
\bibliography{bipartite}

\end{document}